\newtheorem{theorem}{Theorem}[section]
\newtheorem{lemma}[theorem]{Lemma}
\newtheorem{claim}{}[theorem]
\newtheorem{conjecture}[theorem]{Conjecture}
\newcommand{\del}{\backslash}
\newcommand{\con}{/}
\newcommand{\cB}{\mathcal{B}}
\newcommand{\bZ}{\mathbb Z}
\newcommand{\bR}{\mathbb R}
\begin{document}

\sloppy

\title[Frame matroids and lifted-graphic matroids]{Infinitely many excluded minors for frame matroids and for lifted-graphic matroids}

\author{Rong Chen}
\address{Center for Discrete Mathematics, Fuzhou University,
Fuzhou, P. R. China}
\email{rongchen@fzu.edu.cn}
\thanks{This research was supported by grants from  
the Office of Naval Research [N00014-10-1-0851] and NSERC [203110-2011].}

\author{Jim Geelen}
\address{Department of Combinatorics and Optimization,
University of Waterloo, Waterloo, Canada}
\email{jim.geelen@uwaterloo.ca}

\subjclass{05B35}
\keywords{matroids, frame matroids, lifted-graphic matroids, excluded minors}
\date{\today}

\begin{abstract}
We present infinite sequences of excluded minors for both the class of lifted-graphic matroids and the class of frame matroids.
\end{abstract}

\maketitle

\section{Introduction}
A matroid $M$ is a {\em frame matroid} if there is a
matroid $M'$ with a basis $V$ such that
$M=M'\del V$ and, for each $e\in E(M)$, the unique
circuit in $V\cup \{e\}$ has size at most $3$.
A matroid $M$ is {\em lifted-graphic} if there is a matroid
$M'$ with $E(M')=E(M)\cup\{e\}$ such that
$M'\del e= M$ and $M'\con e$ is graphic.
The classes of lifted-graphic matroids and
frame matroids were introduced by Zaslazsky~\cite{Zaslavsky-II}
who proved that they are minor-closed.

We dispel the widespread belief that these classes would
likely have only finitely many excluded minors.
\begin{theorem}\label{main-frame}
There exist infinitely many pairwise non-isomorphic
excluded minors for the class of frame matroids.
\end{theorem}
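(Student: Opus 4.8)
The plan is to construct, for every sufficiently large integer $k$, a matroid $N_k$ that is \emph{not} a frame matroid but all of whose proper minors \emph{are} frame matroids, and to arrange $|E(N_k)|$ to be strictly increasing in $k$; then the $N_k$ are pairwise non-isomorphic and Theorem~\ref{main-frame} follows. Since the class of frame matroids is minor-closed (Zaslavsky~\cite{Zaslavsky-II}), to show that every proper minor of $N_k$ is a frame matroid it is enough to show that $N_k\del e$ and $N_k\con e$ are frame matroids for each $e\in E(N_k)$. A natural place to hunt for such matroids is just outside a ``rigid'' frame matroid of rank about $k$. I would take as the core a swirl-like frame matroid --- for concreteness, the frame matroid of the biased graph whose underlying graph is a $k$-cycle with every edge doubled and in which no cycle is balanced --- and let $N_k$ be a single-element extension or coextension of that core by a point placed so that, although every one-element minor stays inside the class, $N_k$ itself does not; one expects (and would check) that $N_k$ can be taken $3$-connected.

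\textbf{Step 1: $N_k$ is not a frame matroid.} Suppose $N_k=F(H,\cB)$ for some biased graph $(H,\cB)$, where $F(H,\cB)$ denotes the associated frame matroid. The restriction of $N_k$ to the core is a large, highly connected frame matroid, and for such matroids the underlying biased graph is essentially forced: up to the standard equivalences --- switching signs at vertices, rerouting edges across balanced cycles, relabelling parallel loops --- the restriction of $H$ to the core must be the doubled cycle itself. The added element then has to appear in $H$ as an edge, half-edge, or loop whose location is constrained by the family of circuits of $N_k$ through it, and a short case analysis shows that no placement reproduces all the required circuits, a contradiction. Taking $k$ large is what makes the core connected enough for this rigidity to apply.

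\textbf{Step 2: every proper minor of $N_k$ is a frame matroid.} By the reduction above it suffices to realize each $N_k\del e$ and each $N_k\con e$ by a biased graph. Deleting the added element returns the core, which is frame by construction; deleting any other element yields a smaller swirl-like matroid together with the added point in a position that can now be realized by a genuine biased-graph edge, which I would write down explicitly. Contracting the added element, or any element of the core, collapses $N_k$ onto a graphic or nearly-graphic matroid, again with an explicit biased graph. The dihedral symmetry of the cycle gives $N_k$ enough automorphisms that Step~2 reduces to a handful of genuinely distinct cases.

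\textbf{The main obstacle.} Non-isomorphism is immediate from the strictly increasing ground-set sizes, and Step~2 is a finite if tedious verification. The real work is Step~1: in contrast to graphic or regular matroids, a frame matroid can have many inequivalent biased-graph representations, so excluding them all requires a robust uniqueness statement for the biased graph of the large swirl-like core, followed by a proof that no one-edge extension of that biased graph can equal $N_k$. Getting the connectivity hypotheses right so that this rigidity holds, and choosing the extension point so that it genuinely destroys frameness while leaving every one-element minor in the class, is where essentially all of the difficulty lies.
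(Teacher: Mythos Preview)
This is a plan, not a proof. You never actually define $N_k$: ``a single-element extension or coextension of that core by a point placed so that, although every one-element minor stays inside the class, $N_k$ itself does not'' is a specification of what you want, not a construction. Step~1, which you correctly identify as the crux, is not carried out: ``a short case analysis shows that no placement reproduces all the required circuits'' is an assertion about an analysis you do not perform on a matroid you have not defined. The phrases ``one expects (and would check)'', ``I would write down explicitly'', and ``getting the connectivity hypotheses right \ldots\ is where essentially all of the difficulty lies'' all flag work that must be done before anything has been proved.

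Even as a strategy, a single added element is a very local object in a frame matroid (it sits in the span of at most two vertex-stars), and it is far from clear how one point could create a global obstruction that no biased-graph representation of the doubled cycle can absorb. The paper's construction is of a different character. It starts from an explicit $\bR^{\times}$-labelled graph $G_k$, \emph{tightens two free bases} $P\cup\{e_1,e_2\}$ and $Q\cup\{e_1,e_2\}$ of the associated frame matroid (the inverse of relaxing a circuit-hyperplane), and then contracts $\{e_1,e_2\}$. In the resulting matroid both $P$ and $Q$ are spanning circuits. A cocircuit-counting argument forces any frame representation $H$ to be a simple $4$-regular graph isomorphic to $G_k\con\{e_1,e_2\}$, and a parity argument (using that $k$ is odd) then shows that in $H$ exactly one of $P$, $Q$ is a single cycle while the other is two vertex-disjoint cycles---so they cannot both be circuits of a frame matroid on $H$. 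That global parity obstruction, engineered by tightening two complementary bases at once, is the missing idea; deleting or contracting any one element undoes one of the two tightenings and returns a genuine frame matroid, which is what makes the minor-minimality go through.
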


\begin{theorem}\label{main-lifted}
There exist infinitely many pairwise non-isomorphic
excluded minors for the class of lifted-graphic matroids.
\end{theorem}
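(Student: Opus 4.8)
The plan is to produce an explicit infinite sequence of matroids $(N_k)_{k\ge k_0}$ with $r(N_k)$ (equivalently $|E(N_k)|$) strictly increasing in $k$, so that the $N_k$ are pairwise non-isomorphic for trivial reasons, and to show that each $N_k$ is an excluded minor for the class of lifted-graphic matroids. Since this class is minor-closed (as recalled above), to establish that $N_k$ is an excluded minor it is enough to check two things: that $N_k$ itself is not lifted-graphic, and that for every element $e\in E(N_k)$ both $N_k\del e$ and $N_k\con e$ are lifted-graphic. Indeed, every proper minor of $N_k$ has the form $N_k\del X\con Y$ with $X\cup Y\neq\emptyset$, hence is a minor of some $N_k\del e$ or $N_k\con e$, hence is lifted-graphic.

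For the second point I would describe $N_k$, and each of its single-element deletions and contractions, by an explicit graph together with the single-element lift prescribed by the definition: for each relevant $e$ I would exhibit a matroid $M'$ with $E(M')=E(N_k\del e)\cup\{f\}$ (respectively $E(N_k\con e)\cup\{f\}$) such that $M'\del f$ is the matroid in question and $M'\con f$ is graphic. The intention is that $N_k$ be built from a short pattern repeated $k$ times and then ``closed up'' cyclically, so that its automorphism group acts with only a bounded number of orbits on $E(N_k)$; then only $O(1)$ cases arise, each handled by a lift and a graph written down uniformly in $k$, and each verification reduces to a routine rank computation.

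For the first point --- that $N_k$ is not lifted-graphic --- the idea is to assume a lifted-graphic representation of $N_k$, that is, a matroid $M'$ and graph as above, and to show that the combinatorics of $N_k$ forces the underlying graph into an essentially unique shape that then cannot be completed consistently. Concretely, the small circuits of $N_k$ (its triangles, or its lines) must correspond under the representation to very restricted configurations in the graph --- edges, short paths, parallel classes of unbalanced digons --- and along the repeated pattern this pins the graph down edge by edge; following the pattern all the way around the cyclic closure then produces a parity or consistency obstruction, in the spirit of a M\"obius-type obstruction to a coherent global orientation. A key preliminary is that $N_k$ is highly connected, which is what makes the representation rigid enough for this argument to run.

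The step I expect to be the main obstacle is showing that $N_k$ is not lifted-graphic: the lift construction is flexible, since both the location of the lifted element and the designation of which cycles of the graph are ``unbalanced'' provide freedom, so the non-representability argument must rule out all of these possibilities at once. The delicate part is calibrating the family so that the local pattern is represented rigidly enough to force the global contradiction while simultaneously every single-element minor remains lifted-graphic --- in other words, the obstruction must be genuinely global, visible only in $N_k$ and destroyed by the removal of any single element. The same design strategy underlies the proof of Theorem~\ref{main-frame}, with ``frame'' representations and the corresponding biased-graph obstruction in place of the ``lifted-graphic'' ones.
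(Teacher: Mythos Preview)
Your outline has the right shape --- a cyclic family with a parity obstruction, single-element minors checked uniformly, non-representability forced by rigidity of the representation --- and this is indeed how the paper proceeds. But what you have written is a plan, not a proof: the entire content of the theorem lies in the construction of the family, and you have not given one. Two concrete points where your sketch diverges from what the paper actually needs are worth flagging.

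First, for the step ``every single-element minor is lifted-graphic'', your plan is a case analysis over orbit representatives, exhibiting an explicit lift for each. The paper avoids this entirely by building $M_k^L$ as a double relaxation of circuit-hyperplanes in a matroid $N_k$ that is already lifted-graphic. Then for any $e$ in one of the two relaxed sets, $M_k^L\del e$ or $M_k^L\con e$ coincides with the corresponding minor of a \emph{single} relaxation of $N_k$, and a single relaxation is shown to still be lifted-graphic by exhibiting the same (or a slightly modified) graph as a representation. This relaxation/tightening trick is the real engine behind the ``minors are in the class'' half, and it is what makes the obstruction genuinely global and destroyed by any single deletion or contraction; without it you would have to design the family so that $O(1)$ ad hoc lifts happen to exist, which is much harder to arrange.

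Second, for the non-representability step you propose to pin down the graph using small \emph{circuits} --- triangles, short lines. That is how the paper handles the frame case, where a $4$-element circuit in a simple representation must be a $4$-cycle. In the lifted-graphic case this fails: circuits of $LM(G,\cB)$ include pairs of vertex-disjoint unbalanced cycles, so small circuits need not be local in $H$ and do not directly constrain adjacency. The paper instead pins down $H$ using its $4$-element \emph{cocircuits}: it shows $H$ must be $4$-regular, lists all $4$-element cocircuits of the target matroid, and observes that each vertex-star $\delta_H(v)$ must appear on that list. This determines $H$ up to a small ambiguity, after which the odd-$k$ parity forces one of two spanning sets $P,Q$ to induce a single Hamiltonian cycle and the other to induce two disjoint cycles --- but both are independent in the matroid, a contradiction. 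Your M\"obius intuition is exactly right, but the lever that makes it bite is cocircuits, not circuits.
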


Our excluded-minors are based on constructions introduced by Chen and Whittle~\cite{CW17}. 
DeVos, Funk, and Pivotto \cite{DFI, Funk} characterised the non-$3$-connected excluded-minors for the class of frame matroids.

The existence of an infinite set of excluded minors
does not necessarily prevent us from describing a class
explicitly; see, for example, Bonin's excluded minor
characterization for the class of lattice-path matroids~\cite{Bonin}. 
We believe that the excluded-minors for both the class of frame matroids and the class of lifted-graphic matroids
are highly structured, and that it may be possible to obtain an explicit characterization of the 
sufficiently large excluded minors. Towards this end we pose the following conjectures.

\begin{conjecture}\label{conj-lift}
There exists an integer $k$ such that
all excluded minors for the class of lifted-graphic
matroids  have branch-width at most $k$.
\end{conjecture}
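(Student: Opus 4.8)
We sketch a possible approach to Conjecture~\ref{conj-lift}. Write $\mathcal{L}$ for the class of lifted-graphic matroids and suppose, for a contradiction, that $\mathcal{L}$ has excluded minors of arbitrarily large branch-width. The overall plan is the ``local-to-global'' strategy familiar from excluded-minor work on representable matroids: inside a highly connected region of a putative large excluded minor, reconstruct a biased-graph lift representation, and then patch these local representations into a global one, contradicting the hypothesis. First I would dispose of the non-$3$-connected case: by analogy with the frame-matroid analysis of DeVos, Funk, and Pivotto~\cite{DFI, Funk}, one expects only finitely many non-$3$-connected excluded minors for $\mathcal{L}$, so it suffices to bound the branch-width of the $3$-connected ones. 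A $3$-connected excluded minor $M$ of large branch-width carries a tangle $\mathcal{T}$ of large order, and a standard reduction lets us treat $M$ as highly connected relative to $\mathcal{T}$.

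The heart of the argument needs two structural inputs. The first is a grid-type theorem: every matroid of sufficiently large branch-width has, as a minor, a large member of one of a bounded number of highly structured families (for instance, cycle and bond matroids of large walls). Since every proper minor of $M$ lies in $\mathcal{L}$, the large structured minor $N$ guaranteed near $\mathcal{T}$ is itself lifted-graphic --- and it cannot be all of $M$, or else $M\in\mathcal{L}$ --- so $N$ is a highly connected member of $\mathcal{L}$, and this pins down the local structure of $M$. The second input is a \emph{uniqueness} theorem for lift representations: a sufficiently large, highly connected lifted-graphic matroid has an essentially unique representation as a biased graph $(G,\mathcal B)$ giving rise to its lift matroid, with the exceptional, non-uniquely represented cases confined to matroids of bounded size or rank. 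This is the lifted-graphic counterpart of Whittle's stabilizer theory, and it must be proved combinatorially for biased graphs, since members of $\mathcal{L}$ need not be representable over any field.

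With these in hand I would carry out the patching step. For every element $e$ of $M$, both $M\del e$ and $M\con e$ lie in $\mathcal{L}$ and so admit biased-graph lift representations; by the uniqueness theorem applied within $\mathcal{T}$, all of these representations agree on the bulk of $M$. Reconciling them --- tracking how a biased graph representing $M\del e$ compares with one representing $M\con e$ across the small separations that $\mathcal{T}$ controls, and absorbing a bounded number of exceptional elements --- should produce a single biased graph whose lift matroid is $M$. This contradicts $M\notin\mathcal{L}$, and so bounds the branch-width of every excluded minor by the constant $k$ extracted from the grid-type and uniqueness theorems.

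I expect the main obstacle to be the interface between the uniqueness theorem and the patching step: one must understand \emph{exactly} how a lift representation can change when a single element is deleted or contracted across a low-order separation, and must pin down the finitely many small or low-rank matroids where lift representations genuinely proliferate. This is where the combinatorics peculiar to biased graphs --- balanced versus unbalanced cycles, handcuffs, and the theta property --- enters in an essential way, and where lifted-graphic matroids behave differently from both frame matroids and $GF(q)$-representable matroids. A secondary difficulty is that the sharpest available matroid structure theory is stated for $GF(q)$-representable matroids, so one needs either a preliminary reduction showing that large excluded minors for $\mathcal{L}$ are representable over some field, or a self-contained grid theorem valid for all matroids of large branch-width.
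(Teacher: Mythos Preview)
The statement you are addressing is a \emph{conjecture} in the paper, not a theorem; the authors pose it as an open problem and offer no proof. There is therefore no paper proof to compare your proposal against.

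What you have written is not a proof but a research programme. You yourself flag the essential gaps: you would need (a) a classification of the non-$3$-connected excluded minors for lifted-graphic matroids analogous to the DeVos--Funk--Pivotto result for frame matroids, (b) a grid-type theorem usable for arbitrary matroids of large branch-width, and most critically (c) a uniqueness theorem for lift representations of large highly connected lifted-graphic matroids. None of these is established in the paper or in the cited literature, and (c) in particular is a substantial open-ended project --- the paper's own constructions exploit precisely the phenomenon that biased-graph representations are \emph{not} unique in general (the same matroid $N_k$ has two distinct representations $G_k$ and $G'_k$). Until you can state and prove (c) with explicit constants and a complete list of exceptional cases, the patching step cannot be carried out, and the argument does not close.

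In short: your outline is a reasonable sketch of how experts expect such a conjecture might eventually be attacked, but it is not a proof, and the paper makes no claim to have one.
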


\begin{conjecture}\label{conj-frame}
There exists an integer $k$ such that
all excluded minors for the class of
frame matroids have branch-width at most $k$.
\end{conjecture}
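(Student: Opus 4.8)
The plan is to argue by contradiction, adapting the template that has been used to bound the size of the excluded minors for the classes of $GF(q)$-representable matroids. Suppose there is a sequence $M_1,M_2,\dots$ of pairwise non-isomorphic excluded minors for the class of frame matroids whose branch-widths tend to infinity; we want to deduce that $M_i$ is frame once $i$ is large, which is absurd. Since DeVos, Funk, and Pivotto showed that there are only finitely many non-$3$-connected excluded minors for the class of frame matroids, all but finitely many of the $M_i$ are $3$-connected, so we may assume that each $M_i$ is $3$-connected.

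The first step is to promote $3$-connectivity to connectivity that grows with $i$. Large branch-width produces a tangle of large order in $M_i$, and by tangle theory either $M_i$ is highly connected with respect to this tangle or it contains a long \emph{flower}: a cyclic sequence of many petals, consecutive blocks of which are separated by separations of bounded order. In the flower case the aim would be to show that all but boundedly many petals are redundant --- deleting or contracting a single element inside such a petal yields a frame matroid, and the biased graphs representing these minors pin down the local structure tightly enough that the petal can be absorbed, contradicting minimality of $M_i$. I expect this flower reduction to be the principal obstacle, because the interaction between matroid connectivity and the combinatorics of a biased-graph representation is delicate, and there is no representation theory for frame matroids as rigid as the one available over a fixed finite field.

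In the remaining case $M_i$ is $t$-connected for some $t$ tending to infinity with $i$. Fix an element $e$. Then $M_i\del e$ and $M_i\con e$ are frame and, inheriting large branch-width, have biased-graph representations $(G,\cB)$ and $(G',\cB')$ whose underlying graphs have large tree-width. The input needed here is a \emph{rigidity theorem}: a sufficiently highly connected frame matroid has an essentially unique biased-graph representation, up to switching and the other standard equivalences of biased-graph representations, together with finitely many low-rank exceptions. Granting this, the representations of $M_i\del e$ and $M_i\con e$ must be compatible on the common ground set $E(M_i)-e$; one then uses the representation of $M_i\con e$ to locate the vertex, and the biased structure, to which $e$ must attach in $G$, and adding $e$ back gives a biased graph representing $M_i$ --- the desired contradiction. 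The same scheme, with biased graphs replaced by the lift representations of lifted-graphic matroids (the underlying graph together with at most one additional point), would prove Conjecture \ref{conj-lift}.

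A natural framework for organising both the rigidity step and the flower reduction is the class of quasi-graphic matroids of Geelen, Gerards, and Whittle, which contains both of the classes in question and for which a rough structure theory is being developed. It would suffice to show that every excluded minor for the class of frame matroids of sufficiently large branch-width is quasi-graphic, and that a highly connected quasi-graphic matroid all of whose single-element deletions and contractions are frame is itself frame. For the first of these the grid theorem for matroids should help: a would-be excluded minor of large branch-width has a large, highly structured minor, and the uniform and projective-geometry-like outcomes are ruled out at once because they are not frame.
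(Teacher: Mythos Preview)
The statement you are attempting to prove is posed in the paper as an open \emph{conjecture}; the paper gives no proof of it, and indeed the paper's main contribution goes in the opposite direction, constructing infinitely many excluded minors. So there is no ``paper's own proof'' to compare against, and your proposal should be read as a strategy for attacking an open problem rather than as a reconstruction of an existing argument.

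As a strategy it is reasonable in outline but contains at least two genuine gaps that you yourself flag. First, the ``rigidity theorem'' you invoke --- that a sufficiently highly connected frame matroid has an essentially unique biased-graph representation --- is not known. Biased-graph representations are far less rigid than linear representations over a fixed finite field; there are connected frame matroids with many inequivalent representations, and the precise connectivity threshold (if any) at which uniqueness kicks in has not been established. Without this, the step where you glue the representations of $M_i\del e$ and $M_i\con e$ to recover a representation of $M_i$ does not go through. Second, the ``flower reduction'' is asserted rather than carried out: showing that long flowers in an excluded minor can be shortened requires controlling how the biased-graph structure sits across low-order separations, and this is exactly the place where the analogy with the $GF(q)$ arguments breaks down, since there is no analogue of the stabiliser/certificate machinery. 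Until both of these ingredients are supplied, the proposal is a plausible plan of attack, not a proof.
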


The class of quasi-graphic matroids,
introduced in~\cite{GGW}, contains
both the lifted-graphic matroids and the frame
matroids. The infinitely many excluded minors given in this paper for the class of frame matroids and the class of lifted-graphic matroids  are quasi-graphic. 
In contrast to Theorems~\ref{main-frame}
and~\ref{main-lifted} we remain confident that
the class of quasi-graphic matroids admits a
finite excluded-minor characterization.
\begin{conjecture}\label{conj-quasi}
There are, up to isomorphism, only finitely
many excluded-minors for the class of
quasi-graphic matroids.
\end{conjecture}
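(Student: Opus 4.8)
The plan is to prove a rough structure theorem for the class of quasi-graphic matroids and then to run a standard excluded-minor argument against it. Recall from~\cite{GGW} that every quasi-graphic matroid~$M$ carries a \emph{framework}: a graph $G$ with $E(G)=E(M)$ that governs the rank function and the circuits of~$M$, and that when $M$ is $3$-connected this framework is essentially unique (up to a short list of small exceptions). The first, and central, step is to make this uniqueness \emph{quantitative}: one wants to show that for a sufficiently connected quasi-graphic matroid, deleting or contracting a single element alters the framework only within a region of bounded size. This would let one reconcile the frameworks of the various proper minors of a would-be excluded minor.

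Granting such a structure theorem, the argument would proceed in two stages. The first is a connectivity reduction: handling small separations by means of the behaviour of frameworks under $1$- and $2$-sums, and using the analysis of the low-connectivity excluded minors for frame matroids in~\cite{DFI, Funk} as a template, one argues that an excluded minor for the class of quasi-graphic matroids which is not sufficiently connected is assembled from a bounded number of bounded-size pieces; there are then only finitely many such, and the problem is reduced to bounding the size of the sufficiently connected excluded minors.

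The second stage is the heart of the matter. Let $M$ be a sufficiently connected excluded minor. For every $e\in E(M)$ both $M\del e$ and $M\con e$ are quasi-graphic, hence carry frameworks; by the quantitative uniqueness from the structure theorem all of these frameworks agree away from a region of bounded size, so they assemble into a single graph $G$ that satisfies the framework axioms for $M$ everywhere except on a bounded \emph{defect set} $X\subseteq E(M)$. If $X=\varnothing$ then $M$ is itself quasi-graphic, a contradiction; hence $X\neq\varnothing$, and --- since the failure of the framework axioms is witnessed inside a bounded neighbourhood of $X$ --- contracting and deleting the elements of $M$ far from $X$ ought to leave a bounded-size minor of $M$ that is still not quasi-graphic. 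As $M$ is an excluded minor this forces $|E(M)|$ to be bounded, and the finiteness of the number of such $M$ follows. A proof along these lines would also clarify why the infinite families of excluded minors for frame and for lifted-graphic matroids constructed in this paper are nonetheless quasi-graphic: the gadgets that wreck the global consistency demanded of a biased graph, or of a single-element lift, violate no framework axiom.

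The main obstacle is that quasi-graphic matroids need not be representable over any field --- they already include the frame matroids of biased graphs with arbitrary gains --- so the linear-algebraic machinery of the matroid minors project is unavailable, and every step above must be carried out combinatorially, at the level of biased graphs and frameworks. The delicate part is the quantitative framework-uniqueness on which everything rests: controlling the way a framework can change under a single deletion or contraction appears to require a genuine connectivity theory for frameworks --- an analogue of tangles and flowers adapted to the framework graph --- together with some device for reconciling the graph-minors structure of the framework with the matroid structure of~$M$. It is the construction of that bridge, rather than the bookkeeping in the two stages above, that I expect to absorb the bulk of the work.
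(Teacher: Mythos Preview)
The statement you are addressing is a \emph{conjecture} in the paper, not a theorem; the authors offer no proof and explicitly present it as open, citing only Chen's result on $8$-connected excluded minors as supporting evidence. There is therefore no ``paper's own proof'' against which to compare your proposal.

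What you have written is not a proof but a research programme, and you are candid about this: the entire argument rests on a ``quantitative framework-uniqueness'' result that you do not state precisely, let alone prove, and whose very formulation you acknowledge would require developing ``a genuine connectivity theory for frameworks'' from scratch. Every subsequent step is conditional on this unproved structure theorem. The second-stage argument also has a real gap beyond the missing structure theorem: you assert that ``contracting and deleting the elements of $M$ far from $X$ ought to leave a bounded-size minor of $M$ that is still not quasi-graphic,'' but being quasi-graphic is not a local property in any sense you have established, so there is no reason the non-quasi-graphic-ness should survive passage to a bounded minor. Indeed, the very constructions in this paper show that the obstruction to being frame or lifted-graphic can be a global parity phenomenon spread over the whole ground set; you would need to argue carefully that the analogous obstruction for quasi-graphic matroids is genuinely local, and nothing in your outline does this.

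In short: the paper proves nothing here, and neither does your proposal. It is a plausible-sounding sketch of how one might someday attack the conjecture, but as written it contains no verifiable mathematical content.
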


In support of this conjecture, Chen \cite{Chen17} recently proved that there are only two $8$-connected excluded minors for the class of quasi-graphic matroids;
namely $U_{3,7}$ and $U_{4,7}$. 

\section{Preliminaries}
We assume that the reader is familiar with matroid theory
and we follow the terminology of Oxley~\cite{Oxley}.

Recall that a {\em circuit-hyperplane} of a matroid $M$
is a set $C$ that is both a circuit and a hyperplane, and
that we can obtain a new matroid $M'$ by relaxing a
circuit-hyperplane $C$ of $M$;
see \cite[Proposition~1.5.14]{Oxley}.
More specifically, $\cB(M') = \cB(M)\cup \{C\}$ 
where $\mathcal B(M)$ is the set of bases of $M$.

The reverse operation was introduced by Chen and Whittle~\cite{CW17}.
A {\em free basis} of a matroid $M$ is a basis $B$ such that 
$B\cup\{e\}$ is a circuit for each $e\in E(M)-B$.
If $B$ is a free basis of $M$ then $(E(M),\cB(M)-\{B\})$ 
is a matroid (see~\cite{CW17}); we say that
$(E(M),\cB(M)-\{B\})$ is obtained by {\em tightening} $B$.

Let $G$ be a graph. For $v\in V(G)$ we let $\delta_G(v)$
denote the set of edges incident with $v$.
For any $U\subseteq V(G)$ and $F\subseteq E(G)$, let $G[U]$
be the induced subgraph of $G$ defined on $U$, and let $G[F]$
be the subgraph of $G$ with $F$ as its edge set and without
isolated vertices. A {\em cycle} of a graph is a
connected $2$-regular subgraph.

We assume that the reader is familiar with  bias graphs;
see Zaslavsky~\cite{Zaslavsky-I}.
Let $(G,\cB)$ be a bias graph. The cycles in $\cB$
are called {\em balanced} and a subgraph $H$ of
$G$ is {\em balanced} if each of the cycles in $H$ is
balanced. A set $F\subseteq E(G)$ is {\em balancing}
if $G\del F$ is balanced.

For this paper it is more convenient to use
the bias graph definition of frame matroids, which is in fact the way that they were originally
defined by Zaslavsky~\cite{Zaslavsky-II}.
Let $(G,\cB)$ be a bias graph. 
We define $FM(G,\cB)$ to be the matroid
with ground set $E(G)$ such that $I\subseteq E(G)$
is independent if and only if $G[I]$ has no balanced 
cycles and for each component $H$ of $G[I]$ we have
$|E(H)|\le |V(H)|$. Henceforth we will call a matroid $M$ a 
{\em frame matroid} if and only if $M=FM(G,\cB)$ for 
some bias graph $(G,\cB)$; 
Zaslavsky~\cite{Zaslavsky-Frame} proved that this
definition is equivalent to the geometric definition
stated in the introduction. 

Let $M$ be a matroid. If $(G,\cB)$ is a biased graph
such that $M=FM(G,\cB)$, then $\cB$ is implicitly
determined by $G$ (and $M$). Hence we refer to the graph
$G$, itself, as a {\sl frame representation} of $M$,
and given a frame representation of a matroid we will
refer to its cycles as balanced or non-balanced accordingly.

As with frame matroids, it is also more convenient to use
the bias graph definition of lifted-graphic matroids;
see Zaslavsky~\cite{Zaslavsky-II}.
We define $LM(G,\cB)$ to be the matroid 
with ground set $E(G)$ such that $I\subseteq E(G)$
is independent if and only if $G[I]$ has at most one cycle
and, should it exist, that cycle is non-balanced.
Henceforth we will call a matroid $M$ a 
{\em lifted-graphic matroid} if and only if $M=LM(G,\cB)$ for 
some bias graph $(G,\cB)$; 
Zaslavsky~\cite{Zaslavsky-Lifted} showed that this new
definition is equivalent to the earlier definition
stated in the introduction.

Let $M$ be a matroid. If $(G,\cB)$ is a biased graph
such that $M=LM(G,\cB)$, then $\cB$ is implicitly
determined by $G$ (and $M$). Hence we refer to the graph
$G$, itself, as a {\sl lifted-graphic representation} of $M$,
and given a lifted-graphic representation of a matroid we will
refer to its cycles as balanced or non-balanced accordingly. 

One well-known way to construct a bias graph is via a group-labelled
graph (also known as a {\em gain graph} and a {\em voltage graph}). 
Here we use only the group of integers under addition,
which we denote by $\bZ$, and the group of non-zero real numbers
under multiplication, which we denote by $\bR^{\times}$.
For an abelian group $\Gamma$, a {\em $\Gamma$-labelled graph} is a pair $(\vec G,\gamma)$ where
$\vec G$ is an oriented graph and $\gamma:E(\vec G)\rightarrow \Gamma$.  
Let $(\vec G,\gamma)$ be a $\Gamma$-labelled graph and let 
$G$ be the underlying graph of $\vec G$.
A cycle $C$ of $G$ is {\em balanced} if 
the group-product of the labels on ``clockwise" oriented edges is 
equal to the group-product of the labels on
``counter-clockwise" oriented edges; this is independent
of the direction on $C$ we choose as clockwise.
If $\cB$ is the set of balanced cycles of $G$
then $(G,\cB)$ is a biased graph.

The following construction, due to Zaslavsky \cite{Zaslavsky03}, builds an
$\bR$-representable frame matroid from an $\bR^+$-labelled graph $(\vec G,\gamma)$.
We will assume that $(\vec G,\gamma)$ has no loops.
Let $A$ be a $V(\vec G)\times E(\vec G)$ matrix over $\bR$ where,
for a vertex $v$ and edge $e$, we have
$A_{v,e} = 1$ if $v$ is the tail of $v$, $A_{v,e} = -\gamma(e)$ if $v$ is the head of $e$,
and $A_{v,e}=0$ otherwise. Then 
$M(A) = FM(G,\cB)$ where  $(G,\cB)$ is the bias graph associated with $(\vec G,\gamma)$.

Zaslavsky \cite{Zaslavsky03} also showed how to build an
$\bR$-representable lifted-graphic matroid from a $\bZ$-labelled graph $(\vec G,\gamma)$.
Again will assume that $(\vec G,\gamma)$ has no loops. Let $B$ be the signed incidence
matrix of $\vec G$; thus $B\in\{0,\pm 1\}^{V(\vec G)\times E(\vec G)}$ where
$B$ is $1$ or $-1$ when $v$ is the head or tail, respectively, of $e$. Now construct
a matrix $A$ by appending the vector $\gamma\in \bZ^{E(\vec G)}$ as a new row to $B$.
Then 
$M(A) = LM(G,\cB)$ where $(G,\cB)$ is the bias graph associated with $(\vec G,\gamma)$.

A cocircuit $C^*$ of a matroid $M$ is {\em non-separating}
if $M\del C^*$ is connected.
If $C^*$ is a non-separating cocircuit of a matroid $M$
and $M=FM(G,\cB)$, then either 
$C^*$ is a balancing set of $(G,\cB)$ or
$C^* = \delta_G(v)$ for some vertex $v\in V(G)$.

\section{Frame matroids}
In this section we prove Theorem~\ref{main-frame}. Let $k\geq 7$
be an odd integer. (The condition
that $k\ge 7$ is to simplify the proof; $k\ge 3$ suffices.)
Let $(\vec G_k,\gamma)$ be the $\bR^{\times}$-labelled graph
defined in Figure~\ref{G_n} and
let $G_k$ denote its underlying undirected graph.
Let $\cB$ denote the balanced cycles of  $(\vec G_k, \gamma)$
and let $N_k = FM(G_k,\cB)$. 

\begin{figure}[htbp]
\begin{center}
\includegraphics[page=1,height=8cm]{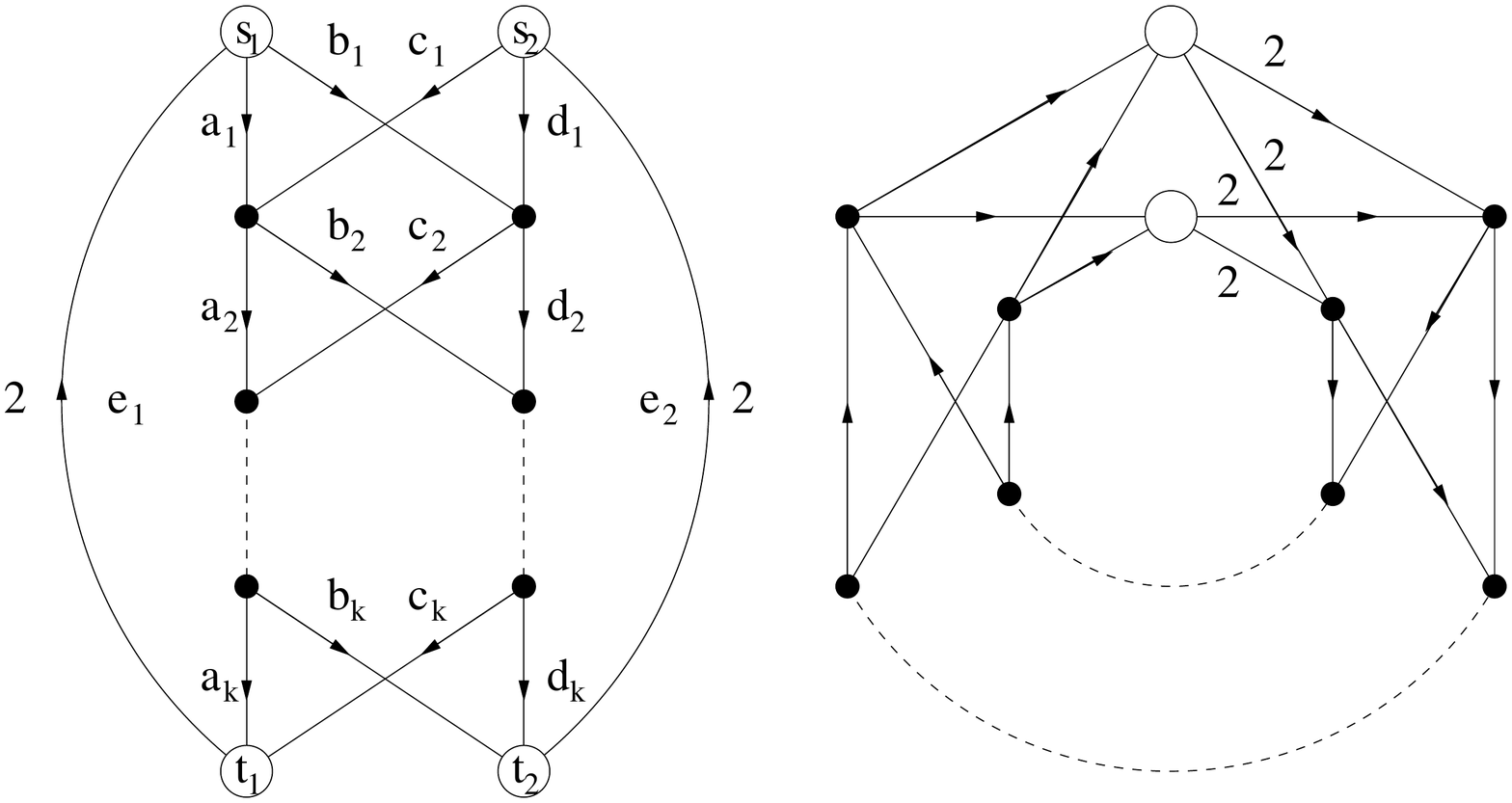}
\caption{The graphs $G_k$ and $G_k\con\{e_1,e_2\}$, respectively, with
their $\bR^{\times}$-labellings; unlabelled edges have group-label
$1$, where $a_i, b_i,c_i,d_i$ are not group labels but edge names.}
\label{G_n}
\end{center}
\end{figure}

Let
$ P = \{a_1,\ldots,a_{k}\}\cup \{d_1,\ldots,d_{k}\}$  and
$ Q = \{b_1,\ldots,b_{k}\}\cup \{c_1,\ldots,c_{k}\}.$
Note that $P\cup\{e_1,e_2\}$ and $Q\cup\{e_1,e_2\}$ are free bases of 
$N_k$; let $M_k^F$ be the matroid obtained
from $N_k$ by tightening $P\cup\{e_1,e_2\}$ and
$Q\cup\{e_1,e_2\}$. Thus $P\cup\{e_1,e_2\}$ and $Q\cup\{e_1,e_2\}$
are circuits of $M_k^F$. 

We will prove that $M_k^F\con\{e_1,e_2\}$ is an excluded minor.
We start with the easier task of showing that 
proper minors of $M_k^F\con\{e_1,e_2\}$ are frame matroids.
\begin{lemma}\label{easy lemma}
For each $e\in P\cup Q$, both $M_k^F/e$ and $M_k^F\del e$ are frame
matroids. 
\end{lemma}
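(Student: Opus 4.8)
The plan is to exploit the fact that $M_k^F$ is obtained from the frame matroid $N_k = FM(G_k,\cB)$ by tightening two circuit-hyperplanes, so that deleting or contracting an element of $P\cup Q$ will either commute with the tightening or make the tightening moot. The key observation is that tightening is the reverse of relaxing a circuit-hyperplane, and relaxations interact well with minors: if $C$ is a circuit-hyperplane of a matroid avoiding an element $e$, then for deletion $e \notin C$ we have that $C$ remains a circuit-hyperplane of $M\del e$ (or becomes dependent/spanning in degenerate cases), and similarly for contraction $C - \{e\}$ behaves predictably. So the first step is to set up, for each $e\in P\cup Q$, what happens to the two distinguished bases $P\cup\{e_1,e_2\}$ and $Q\cup\{e_1,e_2\}$ under the minor operation, and then argue that $M_k^F \del e$ (resp. $M_k^F / e$) is obtained from $N_k\del e$ (resp. $N_k/e$) by tightening the corresponding sets — hence is itself obtained from a frame matroid by tightening, and one must then check directly that the tightened matroid is frame.

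The cleaner route, which I would actually carry out, is to produce an explicit frame representation. Fix $e\in P\cup Q$; by symmetry of the construction (swapping the roles of $P$ and $Q$, and the left–right symmetry of $G_k$ visible in Figure~\ref{G_n}) it suffices to handle, say, $e = a_1$ and $e = b_1$ for each of deletion and contraction. For the \emph{deletion} $M_k^F\del e$: since $e$ lies in exactly one of the two tightened bases when $e\in P$ (namely $P\cup\{e_1,e_2\}$, not $Q\cup\{e_1,e_2\}$), after deleting $e$ the set $P-\{e\}\cup\{e_1,e_2\}$ is no longer a basis of the whole matroid, so only one tightening ``survives.'' Concretely, $N_k\del e$ is still a frame matroid (with frame representation $G_k\del e$), it still has $Q\cup\{e_1,e_2\}$ as a free basis, and tightening that single basis yields $M_k^F\del e$; so I must check that tightening $Q\cup\{e_1,e_2\}$ in $FM(G_k\del e,\cB)$ produces a frame matroid. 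The point of Chen–Whittle's construction is precisely that this particular single tightening can be realised by modifying the bias graph — typically by a ``roll-up'' of a balancing class or by rerouting through a vertex — and I would exhibit the modified $\bR^{\times}$-labelled graph explicitly, most likely by changing some group labels so that a formerly non-balanced cycle through the deleted region becomes balanced, thereby removing the extra basis.

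For the \emph{contraction} $M_k^F/e$ with $e\in P\cup Q$: here the relevant fact is that tightening and contraction nearly commute. If $B$ is a free basis of $N_k$ with $e\in B$, then $B-\{e\}$ is a basis of $N_k/e$, and one checks it is again a free basis (every other element, together with $B$, formed a circuit, and the circuit-through-$e$ structure survives contraction); tightening $B-\{e\}$ in $N_k/e$ gives $(M_k^F)/e$ restricted to the corresponding part. If instead $e\notin B$, tightening $B$ and contracting $e$ simply commute because $B$ remains a basis of $N_k/e$ disjoint from... wait, $e\notin E$ after contraction, so $B$ is a basis of $N_k/e$ and the argument is immediate. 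Either way, $M_k^F/e$ is obtained from the frame matroid $N_k/e = FM(G_k/e,\cB)$ by tightening one or two free bases, and again I produce the frame representation explicitly. Since $G_k/\{e_1,e_2\}$ is already drawn in Figure~\ref{G_n}, the contraction graphs $G_k/e$ for $e\in P\cup Q$ are small modifications and the surviving tightening(s) can be absorbed into label changes.

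The main obstacle I anticipate is the explicit description of the bias graph for the tightened matroid: tightening is not, in general, a bias-graph operation, and the whole subtlety of the Chen–Whittle machinery is identifying exactly when a tightening of $FM(G,\cB)$ is again frame and what graph represents it. For $M_k^F$ itself (tightening \emph{two} bases simultaneously) this presumably fails — that is the crux of why $M_k^F/\{e_1,e_2\}$ is an excluded minor — so the lemma hinges on the fact that after deleting or contracting a single element of $P\cup Q$, at most \emph{one} of the two problematic bases remains free, and a single tightening \emph{is} representable. Verifying ``at most one survives'' is a short combinatorial check ($e$ meets $P$ or $Q$ but not both, among the index-set portions of the bases), but then pinning down the representing graph — and confirming via the rank/independence definition of $FM$ that the tightened matroid equals the claimed $FM$ of the modified graph — is the real work, and I would do it by a direct comparison of bases (or spanning sets in each connected component, using the $|E(H)|\le|V(H)|$ criterion) between the two matroids.
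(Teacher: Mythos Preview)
Your high-level strategy matches the paper's: after deleting or contracting an element of $P\cup Q$, only one of the two tightenings is relevant, and a single tightening of $N_k$ should be frame. But there is an error in your contraction analysis and, more importantly, you are missing the key observation that makes the argument short. The error: when $e\notin B$ you assert ``$B$ is a basis of $N_k/e$.'' It is not --- contraction of a non-loop drops the rank by one, so $B$ becomes a dependent spanning set. For $e\in P$, what actually happens is that $Q\cup\{e_1,e_2\}$ is no longer a basis of $N_k/e$, so that tightening is moot; only the tightening of $(P-\{e\})\cup\{e_1,e_2\}$ survives. You arrive at ``at most one survives'' in your final paragraph, but the route there is wrong.

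The paper avoids all of this by \emph{not} taking the minor first. Let $M_P$ be $N_k$ with only $Q\cup\{e_1,e_2\}$ tightened, and $M_Q$ be $N_k$ with only $P\cup\{e_1,e_2\}$ tightened. For $e\in P$ one has $M_k^F\del e = M_P\del e$ and $M_k^F/e = M_Q/e$ (symmetrically for $e\in Q$), so it suffices to show $M_P$ and $M_Q$ are themselves frame. Here is the point you are missing: $Q\cup\{e_1,e_2\}$ is the edge set of an unbalanced Hamiltonian cycle in $G_k$, and tightening a free basis that is an unbalanced cycle in a frame representation amounts exactly to declaring that cycle balanced --- the \emph{same} graph $G_k$ represents $M_P$. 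No roll-ups, no label surgery. For $M_Q$, the set $P\cup\{e_1,e_2\}$ is not a cycle in $G_k$, but since $\{e_1,e_2\}$ is a series pair in $N_k$ one can reroute $e_1,e_2$ to join $s_1$--$t_2$ and $s_2$--$t_1$; in this alternate representation $G'_k$ of $N_k$, now $P\cup\{e_1,e_2\}$ \emph{is} a cycle, and the same one-line argument gives that $G'_k$ represents $M_Q$. The ``real work'' you anticipate dissolves once you see that the free bases are cycles in suitable frame representations of $N_k$.
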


\begin{proof}
Let $M_P$ and $M_Q$ denote the matroids obtained from 
$M_k^F$ by relaxing the circuit hyperplanes 
$P\cup\{e_1,e_2\}$ and $Q\cup \{e_1,e_2\}$ respectively.
For each $e\in P$, we have
$M_k^F\del e = M_P\del e$ and $M_k^F\con e = M_Q\con e$.
Similarly, for each $e\in Q$, we have
$M_k^F\con e = M_P\con e$ and $M_k^F\del e = M_Q\del e$.
So it suffices to prove that $M_P$ and $M_Q$ are frame matroids.

Note that $M_P$ and $M_Q$ are obtained from 
$N_k$ by tightening the free bases
$Q\cup\{e_1,e_2\}$ and $P\cup \{e_1,e_2\}$ respectively.
Since $G_k$ is a frame representation of $N_k$ and
$G_k[Q\cup\{e_1,e_2\}]$ is a cycle in $G_k$, 
we have that 
$G_k$ is a frame representation of $M_P$; so 
$M_P$ is indeed a frame matroid.
Let $G'_k$ be the graph obtained from $G_n\del \{e_1,e_2\}$
by adding $e_1$ connecting $s_1$ to $t_2$ and adding $e_2$
connecting $s_2$ to $t_1$. It is straightforward to
verify that $G'_k$ is a frame representation of $N_k$
(since $\{e_1,e_2\}$ is a series pair in $N_k$).
Finally, since $G'_k[P\cup\{e_1,e_2\}]$ is a cycle in $G'_k$, 
we have that $G'_k$ is a frame representation of $M_Q$; so 
$M_Q$ is indeed a frame matroid.
\end{proof}

Now it remains to show that $M_k^F/\{e_1,e_2\}$ itself
is not a frame matroid.
\begin{lemma}\label{key lemma}
$M_k^F/\{e_1,e_2\}$ is not a frame matroid.
\end{lemma}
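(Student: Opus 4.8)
The plan is to argue by contradiction: suppose $M := M_k^F/\{e_1,e_2\}$ is a frame matroid, fix a frame representation $(G,\cB)$ with $M = FM(G,\cB)$, and derive strong structural constraints on $G$ from the two distinguished circuits $P$ and $Q$ (which remain circuits in $M$ after contracting $\{e_1,e_2\}$, since $\{e_1,e_2\}$ is independent and disjoint from both) until a contradiction is reached. The first step is to locate $P$ and $Q$ inside $G$. Since $P$ and $Q$ are circuits of large corank, in a frame representation each must appear either as a balanced cycle, or as two non-balanced cycles meeting in a single vertex (a ``theta-like'' handcuff/figure-eight), or as a ``tadpole''; more usefully, I would first compute the connectivity of $M$ and the precise rank function on relevant sets. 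Because $P\cup\{e_1,e_2\}$ and $Q\cup\{e_1,e_2\}$ were tightened from free bases of $N_k$, the matroid $M_k^F$ is very close to $N_k = FM(G_k,\cB)$, and $N_k/\{e_1,e_2\} = FM(G_k/\{e_1,e_2\},\cB')$ has the explicit representation shown in Figure~\ref{G_n}; so I expect $M$ to differ from this known frame matroid in a way that is incompatible with \emph{any} frame representation.

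The key technical tool I would use is the characterization of non-separating cocircuits quoted at the end of the Preliminaries: if $C^*$ is a non-separating cocircuit of $M = FM(G,\cB)$, then either $C^*$ is a balancing set of $(G,\cB)$ or $C^* = \delta_G(v)$ for some vertex $v$. So the second step is to identify enough non-separating cocircuits of $M$ — by direct computation from the matroid, using the relaxation description $M_P, M_Q$ and the explicit graph $G_k/\{e_1,e_2\}$ — and to show that their pattern of intersections cannot be realized by vertex-stars and balancing sets simultaneously. Concretely, the edges $a_i$ and $d_i$ (resp. $b_i$, $c_i$) are arranged in $G_k$ so that contracting $\{e_1,e_2\}$ produces many small cocircuits; I would pin down, for each $i$, a non-separating cocircuit $C_i^*$ lying ``between'' the $P$-side and $Q$-side structure. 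Each such $C_i^*$ is then either a vertex-star or a balancing set of $(G,\cB)$; the tightening has made $P$ and $Q$ both circuits, which over-determines the bias, and I expect that forcing all the $C_i^*$ to be vertex-stars pins down $G$ to (a minor perturbation of) $G_k/\{e_1,e_2\}$ — in which $P$ and $Q$ are \emph{not} both circuits — while allowing any $C_i^*$ to be a balancing set contradicts the non-balance of some explicitly exhibited cycle. Counting balanced cycles, or counting the size of a spanning-clique-type restriction, gives the numerical contradiction; this is where the hypothesis that $k$ is odd and $k \ge 7$ enters, ensuring the two ``sides'' cannot be reconciled by a symmetry of $G$.

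The main obstacle, as usual with excluded-minor arguments for frame matroids, is the flexibility of frame representations: a matroid can have several non-isomorphic bias-graph representations, so ruling out \emph{one} natural candidate graph is not enough — I must rule out all of them. The way I would control this is to work entirely with the non-separating cocircuits and the rank function rather than with a chosen $G$: show that \emph{every} frame representation $G$ of $M$ has a prescribed set of vertices (forced by the vertex-star cocircuits) and a prescribed set of parallel classes, so that $G$ is determined up to the choice of which non-separating cocircuits are balancing sets; then enumerate that finite list of possibilities and eliminate each. A secondary subtlety is handling small cases and low-rank degeneracies — that is precisely what the slack $k\ge 7$ buys, since it guarantees $M$ is sufficiently connected (I would first record that $M$ is $3$-connected, or at least sufficiently connected that non-separating cocircuits behave well) and that the ``$P$-side'' and ``$Q$-side'' gadgets are large enough to be rigid. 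Once the representation is shown to be forced and then shown not to exist, the contradiction is complete and $M_k^F/\{e_1,e_2\}$ is not a frame matroid.
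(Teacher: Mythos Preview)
Your high-level strategy matches the paper's: assume a frame representation $H$ exists, use the non-separating cocircuits to pin down $H$ essentially uniquely (up to a small symmetry), and then derive a contradiction from the fact that $P$ and $Q$ are both circuits together with the parity of $k$. So the architecture is right.

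What is missing from your plan, and what makes the paper's argument go through cleanly rather than devolving into a case enumeration, is a pair of concrete structural observations you have not isolated. First, a counting argument: $r(M)=2k$ and $|E(M)|=4k$, so any frame representation $H$ has $|E(H)|=2|V(H)|$, i.e.\ average degree $4$; combined with the fact that every cocircuit of $M$ has size at least $4$ (read off directly from the $\bR^{\times}$-labelling of $G_k/\{e_1,e_2\}$), this forces $H$ to be $4$-regular. The bound $k\ge 7$ is used precisely here, not for connectivity: it guarantees there are more than $|V(H)|$ pairwise-relevant non-separating $4$-cocircuits, so at least one is balancing, which rules out short cycles and forces $H$ to be \emph{simple}. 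Second, once $H$ is simple and $4$-regular, every $4$-element circuit of $M$ is a $4$-cycle of $H$; since the $4$-cycles of $G_k/\{e_1,e_2\}$ are all circuits of $M$, this rigidly reconstructs $H$ as (isomorphic to) $G_k/\{e_1,e_2\}$, with $\{a_i,d_i\}$ and $\{b_i,c_i\}$ forced to be matchings. No enumeration of ``which cocircuits are balancing'' is needed.

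The endgame is then exactly your parity idea, but phrased graph-theoretically rather than via bias: since $k$ is odd, in $H$ one of $H[P]$, $H[Q]$ is a single Hamiltonian cycle while the other is a disjoint union of two cycles; but $P$ and $Q$ are both circuits of $M$, and a spanning edge set that is a union of two vertex-disjoint cycles is never a frame circuit. Your speculation about $P$ and $Q$ appearing as handcuffs or tadpoles is therefore unnecessary --- the graph $H$ is already determined before you look at $P$ and $Q$, and the contradiction is immediate from their shape in that fixed graph.
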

\begin{proof}
Assume to the contrary that $H$ is a frame representation of
$M_k^F/\{e_1,e_2\}$.
Let $C_i=\{a_i,b_i,c_i,d_i\}$ for each $i\in\{1,\ldots,k\}$ and
let $G=G_k\con \{e_1,e_2\}$.
Figure~\ref{G_n} depicts $G$ with a group labelling
encoding the balanced cycles with
respect to $N_k\con \{e_1,e_2\}$. From this group labelled 
graph we see that:
\begin{itemize}
\item[(i)] each cocircuit in $N_k\con\{e_1,e_2\}$ (and hence also 
in $M_k^F\con\{e_1,e_2\}$) has size at least $4$,
\item[(ii)] for each $4$-element cycle $C$ of $G$, the set $E(C)$ 
is a circuit 
in $N_k\con\{e_1,e_2\}$ and, hence, also in
$M_k^F\con \{e_1,e_2\}$, 
\item[(iii)] for each $i\in\{1,\ldots,k\}$, the set $C_i$ is 
a non-separating
cocircuit in $N_k\con\{e_1,e_2\}$ and, hence, also in
$M_k^F\con \{e_1,e_2\}$, and
\item[(iv)] for each $v\in V(G)$, the set $\delta_G(v)$
is a $4$-element non-separating cocircuit in
$N_k\con\{e_1,e_2\}$ and, hence, also in
$M_k^F\con \{e_1,e_2\}$.
\end{itemize}

\begin{claim}\label{4-regular}
$H$ is a simple connected $4$-regular graph.
\end{claim}

\begin{proof}[Subproof.]
Since $N_k\con\{e_1,e_2\}$ is connected, so is  $M^F_k\con\{e_1,e_2\}$. Then $H$ is connected.
Since $|E(H)| = |P\cup Q| = 4k = 2|V(H)|$,
we see that $H$ has average degree $4$. It follows from 
$(i)$ that $H$  is $4$-regular. It remains to show that
$H$ is simple; suppose otherwise and let $C$ be a cycle
of length at most $2$.
At least $3k-6$ of the non-separating cocircuits described in
(iii) and (iv) are disjoint from $E(C)$.
Since $k\ge 7$ we have $3k-6>|V(H)|$ and hence one
of these non-separating cocircuits is balancing.
But then $C$ is a circuit of $M_k^F\con\{e_1,e_2\}$, a contradiction to the fact that $M_k^F\con\{e_1,e_2\}$ is a simple matroid. 
\end{proof}

For each 4-element circuit $C$ of $M_k^F\con\{e_1,e_2\}$, since $H$ is simple, $C$ is a cycle of $H$. In particular, 
for each $4$-cycle $C$ of $G$, since $E(C)$ is a circuit of $M_k^F\con\{e_1,e_2\}$,  the set $E(C)$ is a cycle of $H$. 
Since $C_1$, $\{a_1,b_1,a_2,c_2\}$, and $\{a_1,c_1,a_k,b_k\}$ are cycles of the simple 4-regular graph $H$, the sets
$\{a_1,d_1\}$ and $\{b_1,c_1\}$ are matchings in $H$. Repeating the analysis $k$-times, 
it is routine to show that $H$ is isomorphic to $G$ and,
moreover, that 
\begin{itemize}
\item[(a)] there is an isomorphism that fixes
$C_1,\ldots, C_k$ set-wise, and
\item[(b)] for each $i\in\{1,\ldots,k\}$, the sets
$\{a_i,d_i\}$ and $\{b_i,c_i\}$ are matchings in $H$.
\end{itemize}
Now, since $k$ is odd, one of 
$H[P]$ and $H[Q]$ is a cycle while the other
is the union of two vertex-disjoint cycles.
However $P$ and $Q$ are both circuits in $M_k^F\con\{e_1,e_2\}$
which contradicts the fact that $H$ is a frame representation.
\end{proof}


\section{Lifted-graphic matroids}
In this section we prove Theorem \ref{main-lifted}. Let $k\geq 3$
be an odd integer.  Let $(\vec G_k,\gamma)$ be the
$\bZ$-labelled graph defined in Figure~\ref{G_n+1} and let
$G_k$ denote its underlying undirected graph.
Let $\cB$ denote the balanced cycles of  $(\vec G_k, \gamma)$
and let $N_k = LM(G_k,\cB)$. 

\begin{figure}[htbp]
\begin{center}
\includegraphics[page=2,height=8cm]{Figures.pdf}
\caption{The graphs $G_k$ and $G_k\con \{e_1,e_2\}$, respectively, with their $\bZ$-labellings;
unlabelled edges have group-value $0$, where $a_i, b_i,c_i,d_i$ are not group labels but edge names.}
\label{G_n+1}
\end{center}
\end{figure}

Let
$ P = \{a_1,\ldots,a_{k}\}\cup \{d_1,\ldots,d_{k}\}$  and
$ Q = \{b_1,\ldots,b_{k}\}\cup \{c_1,\ldots,c_{k}\}.$
Note that $P\cup\{e_1,e_2\}$ and $Q\cup\{e_1,e_2\}$ are 
circuit-hyperplanes of 
$N_k$; let $M_k^L$ be the matroid obtained
from $N_k$ by relaxing $P\cup\{e_1,e_2\}$ and
$Q\cup\{e_1,e_2\}$. 

We will prove that $M_k^L\con\{e_1,e_2\}$ is an excluded minor.
We start by showing that
proper minors of $M_k^L\con\{e_1,e_2\}$ are lifted-graphic matroids;
this is almost a carbon copy of the proof
of Lemma~\ref{easy lemma}.
\begin{lemma}\label{easy lemma+1}
For each $e\in P\cup Q$, both $M_k^L/e$ and $M_k^L\del e$ are 
lifted-graphic matroids. 
\end{lemma}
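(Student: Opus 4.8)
The plan is to mimic the proof of Lemma~\ref{easy lemma} essentially verbatim, translating each step from the frame world to the lifted-graphic world. Let $M_P$ and $M_Q$ denote the matroids obtained from $M_k^L$ by \emph{tightening} (rather than relaxing) the bases $P\cup\{e_1,e_2\}$ and $Q\cup\{e_1,e_2\}$ respectively; equivalently, $M_P$ is obtained from $N_k$ by relaxing the circuit-hyperplane $Q\cup\{e_1,e_2\}$ only, and $M_Q$ from $N_k$ by relaxing $P\cup\{e_1,e_2\}$ only. As in Lemma~\ref{easy lemma}, the point is that each single-element minor of $M_k^L$ over an element of $P\cup Q$ kills one of the two distinguished circuit-hyperplanes: for $e\in P$ one has $M_k^L\del e = M_P\del e$ and $M_k^L\con e = M_Q\con e$, and symmetrically for $e\in Q$. (One should double-check the exact pattern of which of delete/contract destroys which circuit-hyperplane; this depends on whether $e$ lies in the relaxed circuit-hyperplane or its complementary cocircuit-hyperplane, but it is the same bookkeeping as in the frame case.) So it suffices to show $M_P$ and $M_Q$ are lifted-graphic.

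For $M_P$: since $G_k$ is a lifted-graphic representation of $N_k$ and $G_k[Q\cup\{e_1,e_2\}]$ is a \emph{cycle} of $G_k$ (one reads this off Figure~\ref{G_n+1}), relaxing the circuit-hyperplane $Q\cup\{e_1,e_2\}$ simply changes the bias of that one cycle from balanced to non-balanced, which is still encoded by the \emph{same} underlying graph $G_k$ with a (possibly) different set of balanced cycles; so $G_k$ is a lifted-graphic representation of $M_P$. For $M_Q$: exactly as in Lemma~\ref{easy lemma}, build the graph $G_k'$ from $G_k\del\{e_1,e_2\}$ by re-adding $e_1$ joining $s_1$ to $t_2$ and $e_2$ joining $s_2$ to $t_1$; because $\{e_1,e_2\}$ is a series pair, $G_k'$ is again a lifted-graphic representation of $N_k$, and now $G_k'[P\cup\{e_1,e_2\}]$ is a cycle, so relaxing $P\cup\{e_1,e_2\}$ keeps $G_k'$ as a representation of $M_Q$.

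The only genuinely non-routine point — and the place where the argument is \emph{not} a literal copy of the frame case — is the claim that relaxing a circuit-hyperplane whose ground set forms a \emph{cycle} of a lifted-graphic representation again yields a lifted-graphic matroid represented by the same graph. Concretely, one must verify that for a biased graph $(G,\cB)$ and a balanced cycle $C$ of $G$ whose edge set $E(C)$ is a circuit-hyperplane of $LM(G,\cB)$, the matroid $LM(G,\cB\setminus\{C\})$ is exactly the relaxation of $LM(G,\cB)$ at $E(C)$. This is a short direct check from the independence-set definition of $LM$: the only independent sets that change are those containing $E(C)$ together with at most one further element lying in no other cycle — i.e. the bases $E(C)\cup\{f\}$ get added precisely when $C$ becomes unbalanced — and one matches this against $\cB(M')=\cB(M)\cup\{E(C)\}$ using that $E(C)$ is a hyperplane (so it has corank $1$ and its complement is a cocircuit). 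I expect this verification to be the main obstacle, though a minor one; everything else is the same minor-capturing bookkeeping and the same series-pair trick already used for frame matroids, which is why the authors call it "almost a carbon copy."
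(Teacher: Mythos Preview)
Your proposal is correct and follows essentially the same approach as the paper's proof, which is indeed ``almost a carbon copy'' of Lemma~\ref{easy lemma}: define $M_P$ and $M_Q$ by tightening the two free bases of $M_k^L$ (equivalently, relaxing only one circuit-hyperplane of $N_k$), reduce to showing these are lifted-graphic, and exhibit $G_k$ and the series-pair twist $G_k'$ as their respective lifted-graphic representations. Your extra discussion of why relaxing a circuit-hyperplane that is a cycle of the representation preserves lifted-graphicness is more careful than the paper (which simply asserts it), though your description of which independent sets change is slightly garbled --- the only new \emph{basis} upon relaxation is $E(C)$ itself, not sets of the form $E(C)\cup\{f\}$.
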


\begin{proof}
Let $M_P$ and $M_Q$ denote the matroids obtained from 
$M_k^L$ by tightening the free bases
$P\cup\{e_1,e_2\}$ and $Q\cup \{e_1,e_2\}$ respectively.
For each $e\in P$, we have
$M_k^L\del e = M_P\del e$ and $M_k^L\con e = M_Q\con e$.
Similarly, for each $e\in Q$, we have
$M_k^L\con e = M_P\con e$ and $M_k^L\del e = M_Q\del e$.
So it suffices to prove that $M_P$ and $M_Q$ are lifted-graphic matroids.

Note that $M_P$ and $M_Q$ are obtained from 
$N_k$ by relaxing the circuit-hyperplanes
$Q\cup\{e_1,e_2\}$ and $P\cup \{e_1,e_2\}$ respectively.
Since $G_k$ is a lifted-graphic representation of $N_k$ and
$G_k[Q\cup\{e_1,e_2\}]$ is a cycle in $G_k$, 
we have that 
$G_k$ is a lifted-graphic representation of $M_P$; so 
$M_P$ is indeed a lifted-graphic matroid.
Let $G'_k$ be the graph obtained from $G_k\del \{e_1,e_2\}$
by adding $e_1$ connecting $s_1$ to $t_2$ and adding $e_2$
connecting $s_2$ to $t_1$. It is straightforward to
verify that $G'_k$ is a lifted-graphic representation of $N_k$
(since $\{e_1,e_2\}$ is a series pair in $N_k$).
Finally, since $G'_k[P\cup\{e_1,e_2\}]$ is a cycle in $G'_k$, 
we have that $G'_k$ is a lifted-graphic representation of $M_Q$; so 
$M_Q$ is indeed a lifted-graphic matroid.
\end{proof}

Now it remains to show that $M_k^L/\{e_1,e_2\}$ itself
is not a lifted-graphic matroid.
\begin{lemma}\label{key lemma+1}
$M_k^L/\{e_1,e_2\}$ is not a lifted-graphic matroid.
\end{lemma}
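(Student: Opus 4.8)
The plan is to mimic the structure of the proof of Lemma~\ref{key lemma}, replacing the frame-representation argument with the analogous argument for lifted-graphic representations. Suppose for contradiction that $H$ is a lifted-graphic representation of $M_k^L\con\{e_1,e_2\}$, and let $G=G_k\con\{e_1,e_2\}$ with the $\bZ$-labelling from Figure~\ref{G_n+1}. As before, set $C_i=\{a_i,b_i,c_i,d_i\}$. First I would read off from the group-labelled graph the combinatorial data we need about $M_k^L\con\{e_1,e_2\}$: every cocircuit has size at least $4$; each $4$-element cycle of $G$ is a circuit; each $C_i$ is a non-separating cocircuit; and each $\delta_G(v)$ is a $4$-element non-separating cocircuit. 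The key structural input is the lifted-graphic analogue of the fact quoted at the end of Section~2 for frame matroids: if $C^*$ is a non-separating cocircuit of $M=LM(G,\cB)$, then $C^*$ is either a balancing set of $(G,\cB)$ or equals $\delta_G(v)$ for some $v\in V(G)$ --- I would either cite this or give the short argument (a lifted-graphic representation and its associated frame-like incidence structure behave the same way on cocircuits, since $LM$ and $FM$ of a biased graph share the same rank-$r$ flats of the relevant kind; alternatively work directly with the definition of $LM(G,\cB)$).

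Next I would establish the analogue of Claim~\ref{4-regular}: $H$ is a simple connected $4$-regular graph. Connectedness follows since $M_k^L\con\{e_1,e_2\}$ is connected; the edge count gives $|E(H)|=4k$, and one must control $|V(H)|$. Here there is a subtlety compared with the frame case: a lifted-graphic representation on $n$ vertices has rank at most $n$ (not $n-1$ per component as in a frame matroid), so from $r(M_k^L\con\{e_1,e_2\})=2k$ one gets $|V(H)|\ge 2k$, and the size-$\ge 4$ cocircuit condition together with the handshake bound forces $|V(H)|=2k$ and $H$ to be $4$-regular after deleting any isolated vertices (a lifted-graphic representation may carry isolated vertices, which we discard). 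Simplicity is then forced exactly as before: a cycle of $H$ of length at most $2$ is disjoint from at least $3k-6>|V(H)|$ of the non-separating cocircuits of types (iii)--(iv), so one of them is balancing, making that short cycle a circuit of the simple matroid $M_k^L\con\{e_1,e_2\}$, a contradiction. I would double-check that the bound $3k-6>2k$, i.e. $k>6$, is what is actually needed; if so, this is exactly where the hypothesis $k\ge 7$ would be used, so I would either strengthen the hypothesis in the lifted case or find the sharper count that works for all odd $k\ge 3$ as the section header claims.

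With $H$ simple and $4$-regular, every $4$-element circuit of $M_k^L\con\{e_1,e_2\}$ is a $4$-cycle of $H$; in particular each $4$-cycle of $G$ is a $4$-cycle of $H$. Feeding in the $4$-cycles $C_1$, $\{a_1,b_1,a_2,c_2\}$, $\{a_1,c_1,a_k,b_k\}$ and iterating around the ``ladder'' as in the frame proof, I would show $H\cong G$ via an isomorphism fixing each $C_i$ set-wise, with $\{a_i,d_i\}$ and $\{b_i,c_i\}$ matchings in $H$ for every $i$. Since $k$ is odd, one of $H[P]$, $H[Q]$ is a single cycle and the other is a disjoint union of two cycles; thus one of $P$, $Q$ induces a subgraph of $H$ with two cycles. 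But both $P$ and $Q$ are circuits of $M_k^L\con\{e_1,e_2\}$ of size $2k$, and $\{e_1,e_2\}$ was contracted, whereas in a lifted-graphic matroid $LM(H,\cB)$ an independent set plus one element has at most one cycle --- so a $2k$-element circuit of a rank-$2k$ lifted-graphic matroid must be a set $F$ with $H[F]$ connected with exactly one cycle, or two components one of which is a tree and... --- more precisely, a circuit of $LM(H,\cB)$ is either a balanced cycle, or a "theta/handcuff/figure-eight" configuration of two non-balanced cycles joined appropriately; in no case can a circuit be the disjoint union of two cycles together with the right number of tree edges while simultaneously the other of $P,Q$ is a single long cycle. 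Extracting this contradiction from the shape of circuits in lifted-graphic matroids is the main obstacle, and I expect it to require a careful case analysis of which two-cycle and one-cycle configurations on $2k$ edges can both be circuits of the same $LM(H,\cB)$; the parity of $k$ is what makes the two sides genuinely incompatible, exactly as in the frame case, but the bookkeeping is different because the forbidden circuit-shapes are different.

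\medskip

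\noindent\emph{Remark on reuse.} Since Lemma~\ref{easy lemma+1} already mirrors Lemma~\ref{easy lemma} verbatim, and the steps above mirror Lemma~\ref{key lemma} step for step with ``frame'' replaced by ``lifted-graphic'' and the cocircuit dichotomy and circuit-shape facts replaced by their $LM$-analogues, I would write the proof as ``this is almost a carbon copy of the proof of Lemma~\ref{key lemma}'' and then spell out only the two places where the arguments diverge: the rank/vertex-count bookkeeping in the $4$-regularity claim, and the final parity contradiction using the structure of circuits in $LM(H,\cB)$.
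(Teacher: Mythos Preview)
Your proposal contains a fundamental error in the final step: you assert that $P$ and $Q$ are circuits of $M_k^L\con\{e_1,e_2\}$, but they are not --- they are bases. Recall that $M_k^L$ is obtained from $N_k$ by \emph{relaxing} the circuit-hyperplanes $P\cup\{e_1,e_2\}$ and $Q\cup\{e_1,e_2\}$; the direction is reversed from the frame section, where one tightens free bases. Hence $P\cup\{e_1,e_2\}$ and $Q\cup\{e_1,e_2\}$ are free bases of $M_k^L$, and after contracting $\{e_1,e_2\}$ the sets $P$ and $Q$ are bases of $M_k^L\con\{e_1,e_2\}$, in particular independent. The correct final contradiction is therefore the mirror image of what you wrote: if one of $H[P]$, $H[Q]$ is a union of two vertex-disjoint cycles, then that set is dependent in any lifted-graphic matroid on $H$, contradicting independence. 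This is immediate --- no case analysis of circuit shapes is needed.

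Beyond this error, your plan to carbon-copy the frame argument diverges from what the paper actually does, and your own remarks already flag the friction. The paper does not prove $H$ simple, only loopless, and does not use $4$-element circuits to force $H\cong G$. Instead it enumerates \emph{all} $4$-element cocircuits of $M_k^L\con\{e_1,e_2\}$ explicitly --- the sets $\delta_{G_k}(v)$ for $v\notin\{s_1,s_2,t_1,t_2\}$ together with six special sets $A_1,A_2,B_1,B_2,C_1,C_2$ --- and uses that in a $4$-regular $H$ every element lies in exactly two sets of the form $\delta_H(v)$ to determine which $4$-cocircuits are of this ``vertical'' form. A short independence check rules out the pair $(C_1,C_2)$, leaving $(A_1,A_2)$ or $(B_1,B_2)$; in either case the parity of $k$ forces one of $H[P]$, $H[Q]$ to split into two cycles. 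This cocircuit-enumeration route works for all odd $k\ge 3$, whereas your simplicity count $3k-6>2k$ would need $k\ge 7$, and it avoids any appeal to a lifted-graphic analogue of the non-separating-cocircuit dichotomy.
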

\begin{proof}
Assume to the contrary that $H$ is a lifted-graphic representation of
$M_k^L/\{e_1,e_2\}$. Since $N_k\con\{e_1,e_2\}$ is connected, $M_k^L\con\{e_1,e_2\}$ is connected. 
Since identifying two vertices in different components of a bias graph does not change its lifted-graphic matroid, we may assume that $H$ is connected. 
Let $A_1 = \{a_1,b_1,a_k,c_k\}$, $A_2 = \{c_1,d_1,b_k,d_k\}$,
$B_1 = \{a_1,b_1,b_k,d_k\}$, $B_2 = \{c_1,d_1,a_k,c_k\}$,
$C_1 = \{a_1,b_1,c_1,d_1\}$, and $C_2 = \{a_k,b_k,c_k,d_k\}$.
Let $G=G_k\con \{e_1,e_2\}$;
Figure~\ref{G_n+1} depicts $G$ with a $\bZ$-labelling
encoding its balanced cycles with
respect to $N_k\con \{e_1,e_2\}$. From this $\bZ$-labelled 
graph we see that:
\begin{itemize}
\item[(i)] each cocircuit in $N_k\con\{e_1,e_2\}$ (and hence also 
in $M_k^L\con\{e_1,e_2\}$) has size at least $4$, and
\item[(ii)] The only $4$-element cocircuits of $N_k\con\{e_1,e_2\}$
(and hence also of $M_k^L\con\{e_1,e_2\}$) are
the sets $\delta_{G_k}(v)$ for $v\in V(G_k)-\{s_1,s_2,t_1,t_2\}$
and the sets $A_1,$ $A_2,$ $B_1,$ $B_2,$ $C_1,$ and $C_2$. 
\end{itemize}

\begin{claim}\label{4-regular+1}
$H$ is a loopless connected $4$-regular graph.
\end{claim}

\begin{proof}[Subproof.]
Since $|E(H)| = |P\cup Q| = 4k = 2|V(H)|$,
we see that $H$ has average degree $4$. It follows from 
$(i)$ that $H$ is $4$-regular and loopless. 
\end{proof}

We will call a set $X\subseteq E(H)$ {\em vertical}
if there exists $v\in V(H)$ such that $X=\delta_H(v)$.
Now each of the $2k$ vertical sets is a $4$-element
cocircuit of $M_k^L\con\{e_1,e_2\}$ and each element
in $P\cup Q$ is in exactly two vertical sets.
We have listed all of the $4$-element cocircuits
of $M_k^L\con\{e_1,e_2\}$ in $(ii)$.
Note that the elements in
$\{a_2,a_3,\ldots,a_{k-1}\}\cup\{d_2,d_3,\ldots,d_{k-1}\}$ 
are each in exactly two $4$-element cocircuits.
It follows that, for each $v\in V(G_n)-\{s_1,s_2,t_1,t_2\}$,
the set $\delta_{G_k}(v)$ is vertical.
There are three possibilities for the pair of remaining 
vertical sets, namely, $(A_1,A_2)$, $(B_1,B_2)$, and $(C_1,C_2)$.

First suppose that $C_1$ and $C_2$ are both vertical.
Then $H[\{a_1,c_1,a_k,b_k\}]$ is the union 
of two edge-disjoint cycles. So
$\{a_1,c_1,a_k,b_k\}$ is dependent in
$M_k^L\con \{e_1,e_2\}$ and hence also in
$N_k\con \{e_1,e_2\}$. However, from the definition
of $N_k$, the set $\{e_1,e_2,a_1,c_1,a_k,b_k\}$
is independent. 
From this contradiction we have that the remaining 
pair of vertical sets is either
$(A_1,A_2)$ or $(B_1,B_2)$.

Now, since $k$ is odd, one of 
$H[P]$ and $H[Q]$ is a cycle while the other
is the union of two vertex-disjoint cycles.
However $P$ and $Q$ are both independent in $M_k^L\con\{e_1,e_2\}$
which contradicts the fact that $H$ is a lifted-graphic representation.
\end{proof}

\section*{Acknowledgements} We thank the referees for their careful reading of this
manuscript and their detailed comments.

\end{document}